\theoremstyle{plain}
\numberwithin{equation}{section}
\newtheorem{theorem}{Theorem}[section]
\newtheorem{lemma}[theorem]{Lemma}
\newtheorem{remark}[theorem]{Remark}
\newtheorem{proposition}[theorem]{Proposition}
\title{Kazhdan isomorphism over families and integrality under close local fields}
\author{Sabyasachi Dhar}
\begin{document}
	\maketitle 
\begin{abstract} 
Let $G$ be a split connected reductive group defined over $\mathbb{Z}$. Let $F$ be a locally compact non-Archimedean field with residue characteristic $p$. For a locally compact non-Archimedean field $F'$ that is sufficiently close to $F$, D.Kazhdan establishes an isomorphism between the Hecke algebras $\mathcal{H}(G(F),K_m)$ and $\mathcal{H}(G(F'),K_m')$ with coefficients in $\mathbb{C}$, where $K_m$ (resp. $K_m'$) is the $m$-th congruence subgroup of $G(F)$ (resp. $G(F')$). This result is generalised to arbitrary connected reductive algebraic groups by R.Ganapathy. In this article, we extend the result further where the coefficient ring of the Hecke algebras is considered to be more general, namely Noetherian $\mathbb{Z}_l$-algebras with $l\ne p$. Then we use this isomorphism to prove certain compatibility result in the context of $l$-adic representation theory.

\end{abstract}
\section{Introduction}
The goal of this article is to generalise Kazhdan's theory of studying representation theory of connected reductive groups with coeffcients in complex numbers to that in rings of arbitrary characteristic. In the foundational work \cite{Kaz_split}, D.Kazhdan establishes a link between the representation theory for $\textbf{G}(F)$ and $\textbf{G}(F')$, where $\textbf{G}$ is a connected split reductive group defined over $\mathbb{Z}$, $F$ is a $p$-adic field, and $F'$ is a non-Archimedean local field of positive characteristic that is sufficiently close to $F$. Kazhdan's approach is via an isomorphism between the Hecke algebras with complex coefficients, $\mathcal{H}(\textbf{G}(F),K)$ and $\mathcal{H}(\textbf{G}(F'),K')$, for some suitable choice of compact open subgroups $K$ and $K'$ of $\textbf{G}(F)$ and $\textbf{G}(F')$, respectively. In \cite{Kaz_generalreductive}, R.Ganapathy generalises Kazhdan's work for a connected reductive algebraic group (not neccessarily split) defined over a non-Archimedean local field, using machineries of Bruhat--Tits theory. 

Since the early 1990s, the study of representations with coefficients in complex numbers has been extended to representations with coefficients in field -- and then rings -- of any characteristic. It is then natural to ask the question of generalising Kazhdan isomorphism over arbitrary rings. In this article, we prove Kazhdan isomorphism between the Hecke algebras of connected reductive groups with coeffcients in Noetherian $\mathbb{Z}_l$-algebras, where $\mathbb{Z}_l$ is the ring of integers of the field of $l$-adic numbers. Further, using this isomorphism, we prove certain compatibility result in the context of $l$-adic representation theory of $p$-adic groups. The motivation behind our work is the following. The Kazhdan isomorphism over $\mathbb{C}$ is useful in establishing local Langlands correspondence for $G(F')$ by using the local Langlands correspondence for $G(F)$ (see \cite{MR3432266}, \cite{MR3709003}, \cite{Unitary_dual_GLn(D)}, \cite{J_L_Badulescu} for instance). Emerton and Helm in their seminal work \cite{Emerton_Helm_LLC} establishes local Langlands in families (i.e., a local Langlands correspondence over certain Noetherian algebras) for ${\rm GL}_n(F)$. As like in the complex case, we expect that the Kazhdan isomorphism over Noetherian $\mathbb{Z}_l$-algebras can be used in proving local Langlands in families for ${\rm GL}_n(F')$ using local Langlands in families for ${\rm GL}_n(F)$--which will be addressed in future work.

To state the main results, some notations are in order. Let $R$ be a commutative ring with identity. Let $G$ be a connected reductive algebraic group defined over a non-Archimedean local field $F$. For a compact open subgroup $K$ of $G(F)$ of pro-order invertible in $R$, let $\mathcal{H}_{R}(G(F), K)$ be the set of all $R$-valued locally constant, compactly supported, bi-$K$-invariant functions on $G(F)$--which is an $R$-algebra, where the multiplicative structure is given by convolution. Let $F'$ be another non-Archimedean local field that is $m$-close to $F$, i.e., there exists a ring isomorphism $\mathfrak{o}_F/\mathfrak{p}_F^m\rightarrow \mathfrak{o}_{F'}/\mathfrak{p}_{F'}^m$, where $\mathfrak{o}_F$ is the ring of integers of $F$, and $\mathfrak{p}_F$ is its unique maximal ideal. Similar notations follow for $F'$. When $R=\mathbb{C}$ and $G$ is a split connected reductive group defined over $\mathbb{Z}$, Kazhdan establishes an algebra isomorphism 
$$ {\rm Kaz}_m : \mathcal{H}_{\mathbb{C}}(G(F),K_m)\longrightarrow \mathcal{H}_{\mathbb{C}}(G(F'),K_m'), $$ 
where $K_m$ and $K_m'$ are $m$-th congruence subgroups of $G(F)$ and $G(F')$, respectively. The work of \cite{Kaz_generalreductive} proves a generalised version of the above isomorphism by considering $G$ a connected reductive group (not necessarily split) over $F$. Here, we prove Kazhdan isomorphism in the setting of \cite{Kaz_generalreductive}, where $R$ is an arbitrary Noetherian $\mathbb{Z}_l$-algebra. 

For a split connected reductive group defined over $\mathbb{Z}$, one can unambiguously work with this group over arbitrary fields after taking the base change. For a general connected reductive group $G$ defined over $F$, we need to make sense of what it means to give a group $G'$ over $F'$. Fix a maximal $F$-torus $S$ in $G$, and $T$ the maximal $F$-split subtorus of $S$. In \cite{cong_parahoric_radhika}, Ganapathy shows that one associates to the triple $(G,S,T)$ another triple $(G',S',T')$, where $G'$ is a connected reductive group defined over $F'$, $S'$ is a maximal $F'$-torus in $G'$, and $T'$ is the maximal $F'$-split subtorus of $S'$. Let $\mathscr{K}$ be the parahoric group scheme corresponding to a special vertex $v$ in the apartment $\mathcal{A}(T,F)$ of the Bruhat-Tits building of $G(F)$. There is a simplicial isomorphism $\mathcal{A}_m : \mathcal{A}(T,F)\rightarrow \mathcal{A}(T',F')$ (\cite[Proposition 4.4]{cong_parahoric_radhika}), if $F$ and $F'$ are $m$-close. Let $K_m$ be the congruence subgroup 
$$ {\rm Ker}(\mathscr{K}(\mathfrak{o}_F)\xrightarrow {{\rm mod}-\mathfrak{p}_F^m} \mathscr{K}(\mathfrak{o}_F/\mathfrak{p}_F^m)). $$ 
Similarly, we have the objects $\mathscr{K}'$ and $K_m'$ over $F'$ corresponding to the special vertex $v' = \mathcal{A}_m(v)$ in the apartment $\mathcal{A}(T',F')$. The first main result is following.
\begin{theorem}\label{main_intro}
Let $R$ be a Noetherian $\mathbb{Z}_l$-algebra. Let $F$ and $F'$ be two non-Archimedean local fields with same residue characteristic $p$, where $l$ and $p$ are distinct primes. Let $G$ be a connected reductive group defined over $F$. Let $G'$ be the connected reductive group over $F'$ associated to $G$ defined as above. Then, there exists an integer $N>m$ such that if $F$ and $F'$ are $N$-close, we have an $R$-algebra isomorphism 
$$ \widetilde{{\rm Kaz}_m} : \mathcal{H}_R(G(F),K_m)\longrightarrow \mathcal{H}_R(G'(F'),K_m'). $$ 
\end{theorem}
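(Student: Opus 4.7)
The plan is to adapt Ganapathy's proof of the complex-coefficient version in \cite{Kaz_generalreductive} by working at the level of structure constants in the double-coset basis; these take integer values and thus descend to any coefficient ring. Since $l\ne p$ and $K_m$ is pro-$p$ for $m\geq 1$, the pro-order of $K_m$ is a unit in $R$, so $\mathcal{H}_R(G(F),K_m)$ is a well-defined $R$-algebra with $R$-basis $\{\mathbf{1}_{K_m g K_m}\}$ indexed by $g\in K_m\backslash G(F)/K_m$, and similarly on the primed side.

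First I would invoke Ganapathy's construction: using the simplicial isomorphism $\mathcal{A}_m$ and the associated triple $(G',S',T')$ from \cite{cong_parahoric_radhika}, for $N>m$ sufficiently large he produces (in the $\mathbb{C}$-coefficient setting) a bijection $\Phi_N$ between $K_m\backslash G(F)/K_m$ and $K_m'\backslash G'(F')/K_m'$, together with the statement that $\mathbf{1}_{K_m g K_m}\mapsto\mathbf{1}_{K_m'\Phi_N(g)K_m'}$ extends to an algebra isomorphism. The same $\Phi_N$ is purely combinatorial/geometric, independent of the coefficient ring, so it furnishes an $R$-module isomorphism $\widetilde{\mathrm{Kaz}_m}$ by $\mathbf{1}_{K_m g K_m}\mapsto\mathbf{1}_{K_m'\Phi_N(g)K_m'}$, extended $R$-linearly.

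The main task is to verify that this $R$-linear map preserves multiplication. Normalizing the Haar measure so that $K_m$ has volume $1\in R^\times$, the structure constants of $\mathcal{H}_R(G(F),K_m)$ in the double-coset basis are counting numbers
\[ c^{g_3}_{g_1,g_2} \;=\; \#\{\, hK_m\subseteq K_m g_1 K_m \;:\; h^{-1}g_3\in K_m g_2 K_m \,\} \;\in\; \mathbb{Z}, \]
independent of $R$, and similarly on the $F'$-side. The algebra-homomorphism condition for $\widetilde{\mathrm{Kaz}_m}$ therefore reduces to identities of integers $c^{g_3}_{g_1,g_2}=(c')^{\Phi_N(g_3)}_{\Phi_N(g_1),\Phi_N(g_2)}$. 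These are precisely the identities Ganapathy establishes over $\mathbb{C}$; since the complex structure constants are these same integers viewed inside $\mathbb{C}$, equalities in $\mathbb{C}$ force equalities in $\mathbb{Z}$, which then hold in any coefficient ring $R$.

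The main obstacle is to confirm that the threshold $N$ produced by Ganapathy's argument depends only on $m$ and the geometric data of $(G,S,T)$ -- not on $R$ -- so that one uniform $N$ works for every Noetherian $\mathbb{Z}_l$-algebra. A related technical point is to check that Ganapathy's compatibility arguments, ultimately driven by the ring isomorphism $\mathfrak{o}_F/\mathfrak{p}_F^N\cong\mathfrak{o}_{F'}/\mathfrak{p}_{F'}^N$ and the induced isomorphism $\mathscr{K}(\mathfrak{o}_F/\mathfrak{p}_F^N)\cong\mathscr{K}'(\mathfrak{o}_{F'}/\mathfrak{p}_{F'}^N)$, are truly combinatorial in character -- i.e., that the structure-constant identities are proved at the level of cardinalities of finite orbit sets rather than through $\mathbb{C}$-specific analysis. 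No genuinely new ideas appear necessary; the work is a careful reworking of the complex-coefficient proof keeping integrality visible throughout, followed by base change to $R$.
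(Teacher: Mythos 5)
Your argument is correct, but it is a genuinely different route from the one the paper takes. The paper does not transfer the complex-coefficient theorem; it re-runs Kazhdan's original argument directly over $R$: using Proposition \ref{Hecke_finiteness} (finite presentation of $\mathcal{H}_R(G(F),K_m)$ for Noetherian $\mathbb{Z}_l$-algebras, via the finiteness results of \cite{dat2024finiteness}), it picks the finite generating set coming from the Cartan-type decomposition, bounds the degrees of the finitely many defining relations, and then chooses $N$ so large that ${\rm Kaz}_{m,R}$ is multiplicative on functions supported on the corresponding bounded set $G_C$ (Remark \ref{Kaz_product}); the relations are then preserved and the module map is forced to be an algebra map. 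Your proposal instead observes that, with the normalization $\mu(K_m)=1$, the structure constants in the double-coset basis are non-negative integers independent of the coefficient ring, so the conclusion of Ganapathy's $\mathbb{C}$-theorem (a basis-to-basis algebra isomorphism) already forces the integer identities $c^{g_3}_{g_1,g_2}=(c')^{\Phi_N(g_3)}_{\Phi_N(g_1),\Phi_N(g_2)}$, which then base-change to any $R$; in particular your worry about whether Ganapathy's proof is ``combinatorial'' is moot, since you only need the statement over $\mathbb{C}$, not its proof. What each approach buys: yours avoids the finite-presentation input entirely, inherits the threshold $N$ from the complex case (so it is trivially independent of $R$), and in fact proves the result for any commutative ring in which $p$ is invertible, not just Noetherian $\mathbb{Z}_l$-algebras; the paper's route is self-contained over $R$ (it never passes through $\mathbb{C}$) and exhibits the method — finite presentation plus multiplicativity on bounded support — that one would need if no complex-coefficient theorem were available. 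Two small points to pin down in your write-up: check that the normalization of Haar measure in the complex theorem is the same on both sides (or note that rescaling is harmless because $[K:K_m]=[K':K'_m]$ by Proposition \ref{par_con}), and note that the unit $\mathbf{1}_{K_m}\mapsto\mathbf{1}_{K'_m}$ is preserved, so the map is a unital algebra isomorphism.
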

When $R=\mathbb{C}$, the above theorem is due to Ganapathy (\cite[Theorem 4.1]{Kaz_generalreductive}). In the split case, $\widetilde{{\rm Kaz}_m}$ coincides with the isomorphism ${\rm Kaz}_m$. The idea of the proof in \cite{Kaz_generalreductive} is essentially the same as that of \cite{Kaz_split}. The key ingredient in the proof of Theorem \ref{main_intro} that heavily depends on the coefficients of Hecke algebra is that the Hecke algebra $\mathcal{H}_{R}(G(F),K)$ is finitely presented for any compact open subgroup $K$ of the group $G(F)$. When $R=\mathbb{C}$, this is due to the work of \cite{Bernstein_center}. For an arbitrary Noetherian $\mathbb{Z}_l$-algebra $R$, it follows from the work of \cite{dat2024finiteness}--which we use to prove Theorem \ref{main_intro}.

As an application of the above theorem, we prove a compatibility result in the context of $l$-adic smooth representations. Let us explain it more precisely. Let $\overline{\mathbb{Q}}_l$ be an algebraic closure of $\mathbb{Q}_l$, the field of $l$-adic numbers. Let $G$ be a split connected reductive group defined over $\mathbb{Z}$. Let $(\pi,V)$ be a smooth irreducible representation of $G(F)$ on a $\overline{\mathbb{Q}}_l$-vector space $V$, where smooth means every vector in $V$ is stabilised by a compact open subgroup of $G(F)$. We further assume that $V^{K_m}\ne 0$. Then the space $V^{K_m}$ is a simple $\mathcal{H}_{\overline{\mathbb{Q}}_l}(G(F),K_m)$ module (by \cite[Chapter I, Section 6.3]{Vigneras_modl_book}). Now, using Theorem \ref{main_intro} for $R=\overline{\mathbb{Q}}_l$--which is essentially the Kazhdan isomorphism ${\rm Kaz}_m$ after changing the coefficient field from $\mathbb{C}$ to $\overline{\mathbb{Q}}_l$ via a field isomorphism $\mathbb{C}\simeq \overline{\mathbb{Q}}_l$--the space $V^{K_m}$ is a simple $\mathcal{H}_{\overline{\mathbb{Q}}_l}(G(F'),K_m')$ module. Again, using \cite[Chapter I, Section 6.3]{Vigneras_modl_book}, there exists a smooth irreducible $l$-adic representation $(\pi',V')$ of $G(F')$ such that $V'^{K_m'}\ne 0$. Then we have the following result.
\begin{theorem}\label{main_intro_2}
Let $G$ be a split connected reductive group defined over $\mathbb{Z}$. Let $F$ and $F'$ be two non-Archimedean local fields that are suffciently close. Let $\pi$ be an irreducible $l$-adic representation of $G(F)$, and let $\pi'$ be an irreducible $l$-adic representation of $G(F')$, obtained via Kazhdan isomorphism. Then $\pi$ is integral if and only if $\pi'$ is integral.
\end{theorem}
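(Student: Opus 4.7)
The plan is to translate integrality of $\pi$ into the existence of a lattice in its Hecke module $V^{K_m}$ stable under the integral form of the Hecke algebra, and then to transport this lattice along the Kazhdan isomorphism of Theorem \ref{main_intro}.

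Since $\pi$ is admissible, $V^{K_m}$ is finite-dimensional over $\overline{\mathbb{Q}}_l$ and descends, over a sufficiently large finite extension $E/\mathbb{Q}_l$, to a simple $\mathcal{H}_E(G(F),K_m)$-module; integrality of $\pi$ (the existence of a $G(F)$-stable $\overline{\mathbb{Z}}_l$-lattice in $V$) may then be tested by $\mathcal{O}_E$-lattices. Since $\mathcal{O}_E$ is a Noetherian $\mathbb{Z}_l$-algebra, Theorem \ref{main_intro} applied to $R=\mathcal{O}_E$ yields (for $F,F'$ sufficiently close) an $\mathcal{O}_E$-algebra isomorphism
$$ \widetilde{{\rm Kaz}_m} : \mathcal{H}_{\mathcal{O}_E}(G(F),K_m) \longrightarrow \mathcal{H}_{\mathcal{O}_E}(G(F'),K_m'), $$
which, after base change to $\overline{\mathbb{Q}}_l$, coincides with the Kazhdan isomorphism used to define $\pi'$ (the map over a general $R$ being obtained from the $\mathbb{Z}_l$-coefficient version by base change). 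Consequently, $\mathcal{H}_{\mathcal{O}_E}(G(F),K_m)$-stable $\mathcal{O}_E$-lattices in $V^{K_m}$ correspond bijectively to $\mathcal{H}_{\mathcal{O}_E}(G(F'),K_m')$-stable $\mathcal{O}_E$-lattices in $V'^{K_m'}$.

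It therefore suffices to prove the following Hecke-theoretic criterion: an irreducible smooth $\overline{\mathbb{Q}}_l$-representation $\sigma$ of $G(F)$ with $V_\sigma^{K_m}\ne 0$ (descended to $E$) is integral if and only if $V_\sigma^{K_m}$ contains a $\mathcal{H}_{\mathcal{O}_E}(G(F),K_m)$-stable $\mathcal{O}_E$-lattice. Because $l\ne p$, the idempotent $e_{K_m}$ lies in $\mathcal{H}_{\mathcal{O}_E}(G(F))$. The forward direction is immediate from $L^{K_m}=e_{K_m}L$. For the converse, given a stable lattice $M\subset V_\sigma^{K_m}$, set $L := \mathcal{H}_{\mathcal{O}_E}(G(F))\cdot M$ inside the $E$-form $V_{\sigma,E}$; then $L$ is $G(F)$-stable, $\mathcal{O}_E$-torsion-free, spans $V_{\sigma,E}$ over $E$ by irreducibility, and satisfies $e_{K_m}L = M$ via the idempotent identity $e_{K_m}\mathcal{H}_{\mathcal{O}_E}(G(F))e_{K_m} = \mathcal{H}_{\mathcal{O}_E}(G(F),K_m)$. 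Hence $L$ is an $\mathcal{O}_E$-lattice whose extension of scalars to $\overline{\mathbb{Z}}_l$ integrates $\sigma$. Applying this criterion on both sides concludes the proof.

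The hard part is the converse direction of the criterion, where one must verify that $L = \mathcal{H}_{\mathcal{O}_E}(G(F))\cdot M$ is genuinely a lattice, i.e., that $e_{K_m}L = M$ and not a strictly larger $\mathcal{O}_E$-submodule of $V_{\sigma,E}^{K_m}$. This rests on the standard idempotent identity above, an instance of Vign\'eras' framework (\cite{Vigneras_modl_book}, Chapter I); once it is in hand, $\mathcal{O}_E$-torsion-freeness together with the irreducibility argument immediately give the lattice property, and the rest of the argument is formal.
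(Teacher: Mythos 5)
Your overall strategy is the same as the paper's: descend to a finite extension $E/\mathbb{Q}_l$, pass from a $G(F)$-stable lattice to an $\mathcal{H}_{\mathcal{O}_E}(G(F),K_m)$-stable lattice in $V^{K_m}$, transport it through the integral Kazhdan isomorphism (Theorem \ref{main_intro} with $R=\mathcal{O}_E$), and rebuild a $G(F')$-stable lattice by applying $\mathcal{H}_{\mathcal{O}_E}(G(F'))$ to the transported lattice. The forward direction of your criterion and the transport step are fine. The problem is the converse direction, and you have mislocated where the difficulty sits: the identity $e_{K_m}\mathcal{H}_{\mathcal{O}_E}(G)e_{K_m}=\mathcal{H}_{\mathcal{O}_E}(G,K_m)$, hence $e_{K_m}L=M$ for $L=\mathcal{H}_{\mathcal{O}_E}(G)\cdot M$, is the easy formal part. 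What is genuinely at stake is whether $L$ is a lattice at all, and your closing claim that ``$\mathcal{O}_E$-torsion-freeness together with the irreducibility argument immediately give the lattice property'' is not a valid inference. Torsion-freeness, spanning $V$ over $E$, and control of the single level $m$ do not rule out that $L$ contains an $E$-line: any such line would lie in $\ker\big(e_{K_m}|_{V^{K_N}}\big)$ for some $N>m$, and nothing you have said excludes this. (Torsion-free and spanning alone are certainly insufficient; $E$-lines are torsion-free.)

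Concretely, what must be proved is that $e_{K_N}L=e_{K_N}\mathcal{H}_{\mathcal{O}_E}(G)e_{K_m}\cdot M$ is a finitely generated $\mathcal{O}_E$-module, i.e. a lattice in the finite-dimensional space $V^{K_N}$, for \emph{every} level $N\geq m$; only then is $L$ an $\mathcal{O}_E$-form of $V$. This is a real finiteness statement about the image of the bimodule $e_{K_N}\mathcal{H}_{\mathcal{O}_E}(G)e_{K_m}$ acting on $M$, and it does not follow from the idempotent identity; it needs either an argument or a precise citation of the known criterion (in the spirit of Vign\'eras) that integrality of an irreducible $\pi$ is detected by an $\mathcal{H}_{\overline{\mathbb{Z}}_l}(G,K_m)$-stable lattice in $\pi^{K_m}$. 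The paper's proof, whatever its brevity, is structured around exactly this point: it examines $\mathcal{L}'^{K_N'}$ for all $N$ and argues that each is a free $\mathfrak{o}_L$-module of finite rank before concluding that $\mathcal{L}'$ is a $G'$-stable lattice. Until you either supply the all-levels finiteness argument or invoke the criterion as a quotable theorem rather than proving it in one line, your proof is incomplete at the decisive step.
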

The paper is organised as follows. In Section 2, we recall the notion of Hecke algebra. In Section 3, we review parahoric group schemes and set up some initial results--which are essential. In Section 4, we prove Theorem \ref{main_intro}. In Section 5, we recall some standard facts from representation theory of $p$-adic groups and prove Theorem \ref{main_intro_2}.
	
\section{Hecke algebras}
In this section, we recall the notion of Hecke algebras over arbitrary rings. Unless otherwide stated, all rings are commutative rings with unity. For a precise reference, see \cite[Chapter 1, Section 3]{Vigneras_modl_book}.
\subsection{}
Let $R$ be a commutative ring with unity. Let $F$ be a non-Archimedean local field with residue characteristic $p$. Let $G$ be the $F$-points of some connected reductive algebraic group defined over $F$. For an open compact subgroup $K$ of $G$, let $\mathcal{H}_R(G,K)$ be the space of locally constant, compactly supported functions $f:G\rightarrow R$ such that 
$$ f(k_1gk_2) = f(g), $$
for all $k_1,k_2\in K,\, g\in G$. Let us assume that $K$ is of pro-order invertible in $R$. Then, by \cite[Chapter I, 2.4]{Vigneras_modl_book}, there exists a left-invariant Haar measure $\mu$ on $G$ such that $\mu(K)=1$, and we get a multiplicative structure $*$ on the module $\mathcal{H}_R(G,K)$, given by
$$ (f_1* f_2)(x) = \int_{G} f_1(g)\,f_2(g^{-1}x)\,d\mu(g), $$
for all $f_1,f_2\in \mathcal{H}_R(G,K)$. Therefore, the $R$-module $\mathcal{H}_R(G,K)$ with the convolution product $*$ becomes an $R$-algebra, and it is called the Hecke algebra of $G$ with respect to $K$. We denote by $\mathcal{H}_R(G)$ the union 
$$ \bigcup_{K\in \Omega(G)} \mathcal{H}_R(G,K), $$
where $\Omega(G)$ is the set of open compact subgroups of $G$ of pro-order invertible in $R$. Then $\mathcal{H}_R(G)$ is also an $R$-algebra with the convolution product $*$. The following finiteness result of Hecke algebras is crucial for the main theorem.
\begin{proposition}\label{Hecke_finiteness}
Let $R$ be a Noetherian $\mathbb{Z}_l$-algebra. Let $F$ be a non-Archimedean local field with residue characteristic $p$, where $l$ and $p$ are distinct primes. Let $G$ be a connected reductive group defined over $F$. Then, for any compact open subgroup $K$ of $G(F)$, the Hecke algebra $\mathcal{H}_R(G(F),K)$ is finitely presented.
\end{proposition}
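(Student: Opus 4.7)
The plan is to reduce the statement to the case where the compact open subgroup is pro-$p$, in which case the result is essentially the main theorem of Dat--Helm--Kurinczuk--Moss~\cite{dat2024finiteness}, and then to transfer finite presentation through an idempotent corner.

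For the reduction, choose an open normal pro-$p$ subgroup $J$ of $K$; for instance, any sufficiently deep congruence subgroup of a parahoric contained in $K$ will do. Because the pro-order of $K$ is invertible in $R$ by hypothesis, and $[K:J]$ divides this pro-order, the index $[K:J]$ is a unit in $R$. With the Haar measure normalised so that $\mu(J)=1$, the element $e_K := [K:J]^{-1}\mathbf{1}_K$ is then an idempotent in $A := \mathcal{H}_R(G(F), J)$, and a direct computation with convolutions identifies $\mathcal{H}_R(G(F), K)$, as an $R$-algebra, with the corner $e_K * A * e_K$ (up to the harmless rescaling coming from using $\mu_J$ rather than $\mu_K$ to define convolution).

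The substantive input is then~\cite{dat2024finiteness}, which asserts that under the hypotheses of the proposition the Hecke algebra $A$ is a finitely generated module over its centre $Z$, and that $Z$ is itself a finitely generated commutative $R$-algebra; in particular both $Z$ and $A$ are Noetherian. Since $e_K A e_K$ is a $Z$-submodule of the finitely generated $Z$-module $A$ and $Z$ is Noetherian, $\mathcal{H}_R(G(F), K) \cong e_K A e_K$ is itself finitely generated as a $Z$-module. Picking $Z$-module generators $f_1,\ldots,f_r$ of this corner and $R$-algebra generators $z_1,\ldots,z_s$ of $Z$, the products $f_i * f_j$ can each be written as $\sum_k z_{ijk} f_k$; these relations, together with the finite presentation of $Z$ as an $R$-algebra and of $\mathcal{H}_R(G(F),K)$ as a $Z$-module (both guaranteed by Noetherianness of $R$ and $Z$), assemble into a finite presentation of $\mathcal{H}_R(G(F), K)$ as an $R$-algebra.

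The only real obstacle is the deep finiteness theorem of Dat--Helm--Kurinczuk--Moss; once that is granted, both the reduction to a pro-$p$ subgroup via an idempotent corner and the passage from finite generation over a finitely generated central subalgebra to finite presentation as an $R$-algebra are standard and essentially formal. It is worth double-checking that the natural map $Z \to e_K A e_K$ lands in the centre of the corner (which it does, since $e_K$ commutes with every central element), so that the $Z$-module structure on $\mathcal{H}_R(G(F), K)$ is indeed compatible with its $R$-algebra structure.
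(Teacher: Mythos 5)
Your proposal is correct, and at its core it is the same proof as the paper's: the entire substance is the finiteness theorem of Dat--Helm--Kurinczuk--Moss, and everything else is formal. The differences are in the formal packaging. The paper's proof is two sentences: it invokes the principle that a finitely generated Noetherian algebra is finitely presented and cites \cite[Theorem 1.1 and Corollary 1.4]{dat2024finiteness}; you instead deduce finite presentation from the more precise structural statement that the Hecke algebra is module-finite over its center $Z$, with $Z$ a finitely generated commutative $R$-algebra -- which is arguably the more robust route, since it is exactly this central finiteness (rather than Noetherianity per se) that makes the ``finitely generated $\Rightarrow$ finitely presented'' step genuinely formal. Two small remarks on your extra reduction step. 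First, it is dispensable: the results of \cite{dat2024finiteness} are stated for an arbitrary compact open subgroup, so one may apply them to $K$ directly without passing to a pro-$p$ subgroup $J$ and the corner $e_K * \mathcal{H}_R(G(F),J) * e_K$. Second, your corner argument needs the pro-order of $K$ to be invertible in $R$ (so that $[K:J]$ is a unit and $e_K$ exists); this matches the paper's standing convention from Section 2 under which $\mathcal{H}_R(G(F),K)$ is defined, but note that the proposition as stated allows any compact open $K$, and in that generality only the direct appeal to \cite{dat2024finiteness} (with the Hecke algebra taken in its integral form) applies, not the idempotent reduction. With that caveat, your centrality check for the map $Z \to e_K A e_K$ and the assembly of the finite presentation are fine.
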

\begin{proof}
Note that an algebra $\mathcal{A}$ is finitely presented if $\mathcal{A}$ is finitely generated and Noetherian. Then the above proposition follows from \cite[Theorem 1.1 and Corollary 1.4]{dat2024finiteness}.
\end{proof}

\section{Parahoric group scheme}
In this section, we review parahoric group scheme. We also set up some notations and recall results on compatibility of parahoric group schemes under close local fields. We follow the notations of \cite{cong_parahoric_radhika} and \cite{Kaz_generalreductive}.
\subsection{}
Let $F$ be a non-Archimedean local field with ring of integers $\mathfrak{o}_F$ and the unique maximal ideal $\mathfrak{p}_F$. We write $k_F = \mathfrak{o}_F/\mathfrak{p}_F$ for the finite residue field with characteristic $p$. Let $G$ be a connected reductive group defined over $F$. In this article, we always identify a group scheme over $F$ with its $F$-rational points. Let $\mathcal{B}(G,F)$ be the Bruhat--Tits building of $G(F)$ (see \cite{Bruhat_Tits_II}, \cite{Kaletha_Prasad} for details). Let $T$ be a maximal $F$-torus in $G$ and let $T_0$ be the maximal $F$-split subtorus contained in $T$. Let $\mathcal{A}(T_0,F)$ be the apartment in $\mathcal{B}(G,F)$ corresponding to the $F$-split torus $T_0$. Given a facet $\mathcal{F}$ in $\mathcal{A}(T_0,F)$, there exists a smooth affine $\mathfrak{o}_F$-group scheme $\mathscr{P}_{\mathcal{F}}$, which is called a {\it parahoric group scheme} associated with $\mathcal{F}$. Now, for each positive integer $n$, consider the following subgroup
$$ K_n(F)={\rm Ker}\big(\mathscr{P}_{\mathcal{F}}(\mathfrak{o}_F)
\xrightarrow{{\rm mod}-\mathfrak{p}_F^n}\mathscr{P}_{\mathcal{F}}
(\mathfrak{o}_F/\mathfrak{p}_F^n)\big). $$
These congruence subgroups yield a filtration of the parahoric subgroup $K$ of $G(F)$ underlying the group scheme $\mathscr{P}_{\mathcal{F}}$.



\subsection{Reductive groups under close local fields}
Fix a positive integer $m$. Let $F$ and $F'$ be two non-Archimedean local fields that are $m$-close, i.e., there exists a ring isomorphism $\mathfrak{o}_F/\mathfrak{p}_F^m
\xrightarrow{\sim}\mathfrak{o}_{F'}/\mathfrak{p}_{F'}^m$. We fix a ring isomorphism $\lambda_m:\mathfrak{o}_F/\mathfrak{p}_F^m
\rightarrow\mathfrak{o}_{F'}/\mathfrak{p}_{F'}^m$ such that 
$$ \lambda_m(\varpi_F+\mathfrak{p}_{F}^m) = 
\varpi_F'+\mathfrak{p}_{F'}^m, $$
where $\varpi_F$ (resp. $\varpi_F'$) is the uniformizer of $F$ (resp. $F'$). For an object $X$ over $F$, we denote by $X'$ the corresponding object over $F'$.

\subsubsection{}\label{red_close_local}
Let $G$ be a connected reductive algebraic group defined over $F$. We first need to make sense of what it means to give a group $G'$ over $F'$. We first explain this for quasisplit groups. So let us assume that $G$ is quasisplit. Let $(G_0,T_0,B_0,\{u_\alpha\}_{\alpha\in\Delta})$ be a pinned, split, connected, reductive algebraic group over $\mathbb{Z}$ with based root datum $(R,\Delta)$, where $\{u_\alpha\}_{\alpha\in\Delta}$ is a splitting as in \cite[Section 3.2.2]{Bruhat_Tits_II}. An $F$-form of $G_0$ is an algebraic group $H$ defined over $F$ such that $H\times_F F_s$ is isomorphic to $G_0\times_{\mathbb{Z}} F_s$, where $F_s$ is a seperable algebraic closure of $F$. Recall that there is a bijective correspondence between the set of $F$-isomorphism classes of quasisplit groups that are $F$-forms of $G_0$ and the pointed cohomology set $H^1(\Gamma_F,{\rm Aut}(R,\Delta))$, where $\Gamma_F$ denotes the absolute Galois group ${\rm Gal}(F_s/F)$. Let $E_{qs}(G_0,F)_m$ be the set of $F$-isomorphism classes of quasisplit groups $G$ that split (and isomorphic to $G_0$) over an atmost $m$-ramified extension of $F$. It follows from \cite[Lemma 3.1]{cong_parahoric_radhika} that there exists a bijection 
$$ E_{qs}(G_0,F)_m\rightarrow H^1(\Gamma_F/I_F^m,{\rm Aut}(R,\Delta)). $$
Since $F$ and $F'$ are $m$-close, using the Deligne isomorphism (see \cite[Section 2A]{cong_parahoric_radhika} for the definition), we get a bijection 
$$ H^1(\Gamma_F/I_F^m,{\rm Aut}(R,\Delta))\rightarrow H^1(\Gamma_{F'}/I_{F'}^m,{\rm Aut}(R,\Delta)). $$ 
Thus, we get a bijection 
$$ E_{qs}(G_0,F)_m \rightarrow E_{qs}(G_0,F')_m $$
$$ G\mapsto G' $$
Moreover, with the cocycles chosen compatibly, this will yield data $(G,T,B)$ over $F$, where $T$ is a maximal $F$-torus in $G$, and $B$ is a Borel subgroup of $G$ containing $T$ defined over $F$. Similarly, we have $(G',T',B')$ over $F'$.
\subsubsection{}
We now consider the case when $G$ is a connected reductive group over $F$. Recall that $G$ is an inner form of a quasisplit group over $F$, say $G^*$, whose isomorphism class is parametrized by the cohomology set $H^1\big({\rm Gal}(\widetilde{F}/F),G^*_{\rm ad}(\widetilde{F}\big)$, where $\widetilde{F}$ is the completion of the maximal unramified extension of $F$ and $G^*_{\rm ad}$ is the adjoint group of $G^*$. Let $G'^*$ be the quasisplit connected reductive group over $F'$ corresponding to $G^*$ as above. Then we have a bijection (\cite[Lemma 5.1]{cong_parahoric_radhika}) between the cohomology sets
$$H^1\big({\rm Gal}(\widetilde{F}/F),G^*_{\rm ad}(\widetilde{F})\big)\xrightarrow{\sim} H^1\big({\rm Gal}(\widetilde{F'}/F'),G'^*_{\rm ad}(\widetilde{F'})\big).$$
Thus, we get a connected reductive group $G'/F'$ corresponding to $G/F$. Moreover, using the above isomorphism at the level of cocycles, we get a triple $(G,S,A)$ induced from the triple $(G^*,T^*,B^*)$, where $S$ is a maximal $\widetilde{F}$-split $F$-torus of $G$ containing a maximal $F$-split torus $A$ in $G$. Similary, we have the triple $(G',S',A')$ over $F'$ induced from $(G'^*,T'^*,B'^*)$. 

\subsection{Parahoric group scheme under close local fields}\label{par_close_local}
Let $(G,S,A)$ and $(G',S',A')$ be as above. Let $v$ be a special vertex in the apartment $\mathcal{A}(A,F)$, and let $K$ be the maximal parahoric subgroup of $G(F)$ corresponding to $v$. Let $\mathscr{K}$ be the smooth affine group scheme over $\mathfrak{o}_F$ underlying $K$. Then we have the following subgroups
$$ K_m(F) = {\rm Ker}\big(\mathscr{K}(\mathfrak{o}_F)
\xrightarrow{{\rm mod}-\mathfrak{p}_F^m}\mathscr{K}
(\mathfrak{o}_F/\mathfrak{p}_F^m)\big). $$
Since $F$ and $F'$ are $m$-close, there exists a simplicial isomorphism $\mathscr{A}_m:\mathcal{A}(A,F)\rightarrow\mathcal{A}(A',F')$ (\cite[Subsection 4C]{cong_parahoric_radhika}). Let $v'$ be the special vertex in the apartment $\mathcal{A}(A',F')$--which is the image of $v$ under the map $\mathscr{A}_m$. Let $K'$ be the maximal parahoric subgroup of $G'(F')$ corresponding to $v'$. Let $\mathscr{K}'$ be the smooth affine group scheme over $\mathfrak{o}_{F'}$ underlying $K'$, and $K'_m$ the $m$-th congruence subgroup as defined above. In \cite[Corollary 6.3]{cong_parahoric_radhika}, the author proves the following result.
\begin{proposition}\label{par_con} 
Given an integer $m\geq 1$, there exists an integer $r\geq m$ such that if $F$ and $F'$ are $r$-close, then the parahoric group schemes $\mathscr{K}\times_{\mathfrak{o}_F}
\mathfrak{o}_F/\mathfrak{p}_F^m$ and $\mathscr{K}\times_{\mathfrak{o}_{F'}}
\mathfrak{o}_{F'}/\mathfrak{p}_{F'}^m$ are isomorphic. In particular, we have a group isomorphism
$$ \mathscr{K}(\mathfrak{o}_F/
\mathfrak{p}_F^m)\simeq\mathscr{K}'
(\mathfrak{o}_{F'}/\mathfrak{p}_{F'}^m). $$
\end{proposition}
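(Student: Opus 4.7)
The plan is to recover $\mathscr{K} \times_{\mathfrak{o}_F} \mathfrak{o}_F/\mathfrak{p}_F^m$ from a finite list of $F$-dependent ingredients—combinatorial data, structure constants, and a Galois descent cocycle—and then to choose $r$ large enough that each of these ingredients is transported to the $F'$-side by the Deligne isomorphism. I would first reduce to the quasi-split case. Writing $G$ as the inner form of a quasi-split $G^*$ determined by a class in $H^1({\rm Gal}(\widetilde{F}/F), G^*_{\rm ad}(\widetilde{F}))$, the parahoric $\mathscr{K}$ of $G$ at $v$ is constructed from a parahoric of $G^*$ at a compatibly chosen point by inner twisting. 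By \cite[Lemma 5.1]{cong_parahoric_radhika} the cohomology sets controlling these inner forms are in bijection on the two sides, and for $r$ large compared to $m$ the twisting cocycles can be matched mod $\mathfrak{p}_F^m$. This reduces the question to the quasi-split setting with compatibly chosen $(G,T,B)$ and $(G',T',B')$ as in Section 3.2.1.

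Now assume $G$ is quasi-split and splits over an at-most-$s$-ramified extension $E/F$. Bruhat--Tits theory realises $\mathscr{K}$ as the ${\rm Gal}(E/F)$-descent of a smooth affine $\mathfrak{o}_E$-group scheme $\mathscr{K}_E$, where $\mathscr{K}_E$ is built from the Chevalley model $G_0 \times \operatorname{Spec}\,\mathfrak{o}_E$ by gluing the integral models of the root subgroups attached to the affine roots vanishing at $v$. The only $F$-dependent inputs are therefore: (a) the ring $\mathfrak{o}_E$ together with its Galois action; (b) the affine root values at $v$, which depend only on the absolute root datum and on the simplicial isomorphism $\mathscr{A}_m$; and (c) the descent cocycle into ${\rm Aut}(R,\Delta)$, already pinned down above. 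Deligne's theorem for close local fields, applied to $E/F$, produces for $r$ sufficiently large compared to $ms$ a ring isomorphism $\mathfrak{o}_E/\mathfrak{p}_E^{ms} \simeq \mathfrak{o}_{E'}/\mathfrak{p}_{E'}^{ms}$ together with a compatible identification ${\rm Gal}(E/F) \simeq {\rm Gal}(E'/F')$. Performing the same gluing on the $F'$-side, and then taking Galois-fixed subschemes, yields $\mathscr{K} \times_{\mathfrak{o}_F} \mathfrak{o}_F/\mathfrak{p}_F^m \simeq \mathscr{K}' \times_{\mathfrak{o}_{F'}} \mathfrak{o}_{F'}/\mathfrak{p}_{F'}^m$; evaluating $\mathfrak{o}_F/\mathfrak{p}_F^m$-points then gives the in-particular clause.

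The hardest part will be tracking the precise $r$. One must bound how much precision the construction of the root subgroup schemes consumes when reduced mod $\mathfrak{p}_E^{ms}$ (so that the truncated input data determine the truncated parahoric), add the factor $e_{E/F} \le s$ to convert mod-$\mathfrak{p}_F^m$ information over $F$ into mod-$\mathfrak{p}_E^{ms}$ information over $E$, and verify that the descent step loses no additional precision beyond what is already accounted for. Once the construction of $\mathscr{K}_E$ is shown to commute with truncation at the chosen level—essentially a flatness property of the filtration on root subgroups—the remainder of the argument is formal.
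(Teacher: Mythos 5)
The paper itself gives no proof of this proposition: it is quoted directly from \cite[Corollary 6.3]{cong_parahoric_radhika}, so your attempt has to be measured against Ganapathy's argument there. Your outline does follow the broad architecture of that argument --- treat the quasi-split case first using the splitting field and the Deligne isomorphism, then pass to general $G$ as an inner form via \cite[Lemma 5.1]{cong_parahoric_radhika} and descent along $\widetilde{F}/F$ --- but it contains one genuine misstep and leaves the real technical content unproved.

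The misstep is in the quasi-split case: you realise $\mathscr{K}$ as the ${\rm Gal}(E/F)$-descent (Galois-fixed subscheme) of a parahoric $\mathscr{K}_E$ over $\mathfrak{o}_E$, where $E/F$ is the (possibly ramified) splitting field. When $E/F$ is ramified, $\mathfrak{o}_E$ is not \'etale over $\mathfrak{o}_F$, so there is no Galois descent of schemes along $\mathfrak{o}_E/\mathfrak{o}_F$, and the fixed-point scheme of the Galois action need not be smooth or flat over $\mathfrak{o}_F$, let alone equal to the Bruhat--Tits model. The actual construction (Bruhat--Tits, and Ganapathy's congruence argument) builds the quasi-split parahoric directly over $\mathfrak{o}_F$ from a schematic root datum: the root group schemes attached to restricted roots are (possibly truncated) Weil restrictions along $\mathfrak{o}_E/\mathfrak{o}_F$ determined by the Chevalley--Steinberg pinning and the affine root values at $v$, glued via the big cell; \'etale descent enters only along the unramified extension $\widetilde{F}/F$ in the inner-form step. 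Moreover, the part you explicitly defer --- that this construction commutes with reduction mod $\mathfrak{p}_F^m$, so that the truncated schematic root data are determined by, and matched through, the Deligne isomorphism at an explicit level $r$ (with the $e_{E/F}$ bookkeeping you mention) --- is precisely the technical heart of \cite{cong_parahoric_radhika}, not a formal afterthought; and the matching of the twisting cocycles ``mod $\mathfrak{p}_F^m$'' in your first paragraph is likewise asserted rather than argued. As it stands, your text is a plausible roadmap to the cited result rather than a proof of it; given that the paper's intent is simply to invoke Ganapathy's theorem, the honest options are either to cite it as the paper does or to carry out the schematic-root-datum and descent analysis in full.
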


\section{Kazhdan Isomorphism over families}\label{Kaz_fam}
In this section, we prove Kazhdan isomorphism over Noetherian $\mathbb{Z}_l$-algebras (Theorem \ref{main_intro}). In \cite[Theorem 4.1]{Kaz_generalreductive}, Ganapathy establishes an isomorphism between the Hecke algebras  $$ \mathcal{H}_{\mathbb{C}}(G(F),K_m(F))\longrightarrow \mathcal{H}_{\mathbb{C}}(G'(F'),K_m(F')), $$ 
which is a generalisation of \cite[Theorem A]{Kaz_split} from split connected reductive groups over $\mathbb{Z}$ to general connected reductive groups defined over non-Archimedean local fields. The construction of the isomorphism as $\mathbb{C}$-vector spaces can be extended over a commutative ring with unity $R$--which we recall below. Then we prove that this $R$-module isomorphism is infact an $R$-algebra isomorphism, provided $R$ is a Noetherian $\mathbb{Z}_l$-algebra.
\subsection{}
Let $M$ be the centralizer of $A$ in $G$. Let $\Omega_M$ be the Iwahori-Weyl group of $M$ and $\Omega_M^+$ be the set of dominant elements in $\Omega_M$, i.e.,
$$\Omega_M^+:=\{\tau\in\Omega_M:\langle\alpha,\tau\rangle\geq 0 ,\,\forall\alpha\in\Delta_0\},$$
where $\Delta_0$ is the set of simple roots of $\Phi(G,A)$. Let $\widetilde{p}:\Omega_M\rightarrow M(F), \tau\mapsto n_\tau$ be the section of Kottwitz homomorphism $\kappa_{M,F}$ (see \cite[Proposition 3.3]{Kaz_generalreductive}). For each element $\tau\in\Omega_M^+$, the double coset space $G_\tau(F)=K(F)n_\tau K(F)$ is a homogeneous space under the action of $K(F)\times K(F)$, defined by
$$ (a,b).g:=agb^{-1}, $$ 
for all $a,b\in K$ and $g\in G_\tau(F)$. The following set
$$ X_\tau=\big\{K_m(F)gK_m(F):g\in G_\tau(F)\big\} $$ 
is also a homogeneous space under the action of the finite group $K(F)/K_m(F)\times K(F)/K_m(F)$. Let $\Gamma_\tau$ be the stabilizer of $K_m(F)\,n_\tau\,K_m(F)$. Fix a Haar measure $\mu$ on the locally compact group $G(F)$ such that $\mu(K_m(F)) =1$. For each $g\in G(F)$, let $t_g$ be characteristic function on the double coset $K_m(F)\,g\,K_m(F)$. Then we have the following result (see \cite[Proposition 4.4]{Kaz_generalreductive} for a proof).
\begin{lemma}\label{lemma_conv}
Let $\widetilde{p}:\Omega_M\rightarrow M(F)$,\, $\tau\mapsto n_\tau$, be the section of the Kottwitz homomorphism. Let $\Omega_M^+$ be the set of dominant elements of $\Omega_M$.
\begin{enumerate}
\item For any $\tau_1,\tau_2\in\Omega_M^+$, we have
$$ t_{n_\tau} * t_{\tau_2} = t_{n_{\tau_1}n_{\tau_2}} $$
\item For $\tau\in\Omega_M^+$ and $k_1,k_2\in K(F)$, we have
$$ t_{k_1n_{\tau}k_2} = t_{k_1}*t_{n_\tau}*t_{k_2}. $$
\end{enumerate}
\end{lemma}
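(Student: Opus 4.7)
I would prove both parts by direct evaluation of the convolution integral, exploiting two structural features: the normality of $K_m$ in $K = \mathscr{K}(\mathfrak{o}_F)$, and the Iwahori factorization of $K_m$ together with the contraction behavior of dominant elements $n_\tau$.

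Part (2) reduces entirely to normality. By construction $K_m$ is the kernel of the reduction $\mathscr{K}(\mathfrak{o}_F) \twoheadrightarrow \mathscr{K}(\mathfrak{o}_F/\mathfrak{p}_F^m)$, so $K_m$ is a normal subgroup of $K$, and for any $k \in K$ we have $K_m k K_m = k K_m = K_m k$. Hence $t_k$ is the indicator function of a single left $K_m$-coset. A direct substitution $g = k_1 h$ with $h \in K_m$ in
$$ (t_{k_1} * t_{n_\tau})(x) \;=\; \int_G 1_{k_1 K_m}(g)\, 1_{K_m n_\tau K_m}(g^{-1} x)\, d\mu(g) $$
produces an integrand independent of $h$ (since $h^{-1} k_1^{-1} \in k_1^{-1} K_m$ by normality), so the integral evaluates to $\mu(K_m) \cdot 1_{K_m k_1 n_\tau K_m}(x) = t_{k_1 n_\tau}(x)$. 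Applying the same trick with $k_2$ on the right and using associativity of $*$ yields $t_{k_1 n_\tau k_2} = t_{k_1} * t_{n_\tau} * t_{k_2}$.

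Part (1) is the more substantial claim and rests on the Iwahori factorization $K_m = U^-_m \, M_m \, U^+_m$, where $U^\pm$ are the unipotent radicals of a pair of opposite parabolic subgroups with Levi $M$ and $M_m = K_m \cap M(F)$, together with the contraction property for dominant elements: for $\tau \in \Omega_M^+$, $n_\tau^{-1} U^+_m n_\tau \subseteq U^+_m$ and $n_\tau U^-_m n_\tau^{-1} \subseteq U^-_m$. I would first derive the refined coset description $K_m n_\tau K_m = K_m n_\tau U^+_m$ by writing an arbitrary element as $k_1 n_\tau \cdot u^- m u^+$, moving $u^-$ across $n_\tau$ (after which it lands in $U^-_m \subseteq K_m$ and is absorbed into $k_1$) and absorbing $m \in M_m \subseteq K_m$ into the left factor. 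Applying this description twice gives
$$ K_m n_{\tau_1} K_m \cdot K_m n_{\tau_2} K_m \;=\; K_m n_{\tau_1} U^+_m n_{\tau_2} K_m \;=\; K_m n_{\tau_1} n_{\tau_2} \bigl( n_{\tau_2}^{-1} U^+_m n_{\tau_2} \bigr) K_m \;=\; K_m n_{\tau_1} n_{\tau_2} K_m, $$
the last equality again by the contraction of $U^+_m$ under $n_{\tau_2}^{-1}$. To pin down the multiplicity, I would evaluate $(t_{n_{\tau_1}} * t_{n_{\tau_2}})$ at the point $n_{\tau_1} n_{\tau_2}$ and check that the set of $g$ contributing to the integral is exactly a single left $K_m$-coset, so the value is $\mu(K_m) = 1$; bi-$K_m$-invariance then determines the convolution on all of $K_m n_{\tau_1} n_{\tau_2} K_m$.

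The main obstacle is the verification of the Iwahori factorization and contraction estimates in the general (not necessarily split, not necessarily quasi-split) setting of Ganapathy's framework. However, these are standard consequences of Bruhat--Tits theory and already underlie the complex-coefficient proof of \cite[Proposition 4.4]{Kaz_generalreductive}. Crucially, the entire argument is set-theoretic modulo the Haar-measure normalization $\mu(K_m) = 1$; no property of the coefficient ring is used beyond the invertibility of the pro-order of $K_m$ in $R$ needed to define $\mu$. Consequently the proof transfers verbatim from $\mathbb{C}$ to any Noetherian $\mathbb{Z}_l$-algebra $R$ (with $l \ne p$), and we may simply invoke the reference.
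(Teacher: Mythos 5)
Your proposal is correct and follows essentially the same route as the paper: the paper offers no independent argument but simply cites \cite[Proposition 4.4]{Kaz_generalreductive}, whose proof is exactly the Iwahori-factorization/contraction computation you sketch (normality of $K_m$ in $K$ for part (2), the refined coset description $K_m n_\tau K_m = K_m n_\tau U_m^+$ and dominance contraction for part (1)). Your closing observation that the identities are purely set-theoretic coset/counting statements, independent of the coefficient ring once $\mu(K_m)=1$ makes sense (pro-order of $K_m$ a power of $p$, invertible in any $\mathbb{Z}_l$-algebra with $l\neq p$), is precisely what licenses the paper's implicit transfer of the complex-coefficient reference to general $R$.
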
  
\subsubsection{}
Let $R$ be a commutative ring with unity of characteristic different from $p$. The following lemma provides a set of generators for the Hecke algebra $\mathcal{H}_R(G(F),K_m(F))$.
\begin{lemma}
Let $\Sigma$ be a finite subset of $\Omega_M^+$ containing $0$, and $\Sigma$ generates $\Omega_M^+$ as a semigroup. Let $\mathcal{S}_K$ be a set of representatives in $K(F)$ of the finite group $K(F)/K_m(F)$. Then the set 
$$ S = \big\{t_{n_\tau} \mid \tau\in\Sigma\big\} \cup \big\{t_k \mid k\in \mathcal{S}_K\big\} $$
generates the Hecke algebra $\mathcal{H}_R(G(F),K_m(F))$.
\end{lemma}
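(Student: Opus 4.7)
The proof I would write proceeds in three short moves, starting from the standard $R$-module description of $\mathcal{H}_R(G(F),K_m(F))$.

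First, I would recall that since $K_m(F)$ is an open compact subgroup of $G(F)$, the $R$-module $\mathcal{H}_R(G(F),K_m(F))$ is free with basis $\{t_g\}$ indexed by the double cosets $K_m(F)\backslash G(F)/K_m(F)$. So it suffices to show that each $t_g$ lies in the $R$-subalgebra generated by $S$. For this I would invoke the Cartan-type decomposition relative to the special parahoric $K(F)$, namely
$$ G(F) = \bigsqcup_{\tau\in\Omega_M^+} K(F)\,n_\tau\,K(F), $$
which is exactly the decomposition underlying Lemma \ref{lemma_conv}. Writing an arbitrary $g\in G(F)$ as $g = k_1\,n_\tau\,k_2$ with $\tau\in\Omega_M^+$ and $k_1,k_2\in K(F)$, part (2) of Lemma \ref{lemma_conv} gives
$$ t_g = t_{k_1n_\tau k_2} = t_{k_1}*t_{n_\tau}*t_{k_2}. $$

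Second, I would absorb the $k_i$ into the finite set $\mathcal{S}_K$. Since $K_m(F)$ is a normal subgroup of $K(F)$ (it is the kernel of the reduction map to $\mathscr{K}(\mathfrak{o}_F/\mathfrak{p}_F^m)$), for any $k\in K(F)$ the function $t_k$ depends only on the class of $k$ in $K(F)/K_m(F)$. Thus one can replace $k_1,k_2$ by chosen representatives in $\mathcal{S}_K$, so that $t_{k_1}$ and $t_{k_2}$ belong to $S$.

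Third, I would reduce $t_{n_\tau}$ to the generators indexed by $\Sigma$. Since $\Sigma$ generates $\Omega_M^+$ as a semigroup and contains $0$, any $\tau\in\Omega_M^+$ factors as $\tau=\sigma_1\sigma_2\cdots\sigma_r$ with each $\sigma_i\in\Sigma$. Iterating part (1) of Lemma \ref{lemma_conv} yields
$$ t_{n_\tau} = t_{n_{\sigma_1}} * t_{n_{\sigma_2}} * \cdots * t_{n_{\sigma_r}}, $$
which is a product of elements of $S$. Combining the three reductions, every basis element $t_g$ is a convolution product of elements of $S$, so $S$ generates the Hecke algebra as an $R$-algebra.

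There is no genuinely hard step; the whole statement is essentially bookkeeping on top of two inputs. The only point requiring care is the compatibility between the semigroup factorization $\tau=\sigma_1\cdots\sigma_r$ in $\Omega_M^+$ and the convolution identity of Lemma \ref{lemma_conv}(1): the section $\widetilde{p}$ is not a group homomorphism, so one must apply the identity $t_{n_{\tau_1}}*t_{n_{\tau_2}} = t_{n_{\tau_1\tau_2}}$ inductively on the length of the factorization rather than claiming $n_{\sigma_1}\cdots n_{\sigma_r}=n_\tau$ in $M(F)$. Once that is noted, the argument works verbatim over any commutative ring $R$ in which the pro-order of $K_m(F)$ is invertible, so no hypothesis on $R$ beyond what is already in force is needed.
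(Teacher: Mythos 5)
Your first two reductions (the decomposition $G(F)=\bigsqcup_{\tau\in\Omega_M^+}K(F)n_\tau K(F)$, splitting off $t_{k_1}$ and $t_{k_2}$ via Lemma \ref{lemma_conv}(2), and absorbing $k_1,k_2$ into $\mathcal{S}_K$ using the normality of $K_m(F)$ in $K(F)$) are correct and are essentially what the paper does. The problem is in your third step. The identity you invoke, $t_{n_{\tau_1}}*t_{n_{\tau_2}}=t_{n_{\tau_1\tau_2}}$, is \emph{not} Lemma \ref{lemma_conv}(1): the lemma gives $t_{n_{\tau_1}}*t_{n_{\tau_2}}=t_{n_{\tau_1}n_{\tau_2}}$, where the right-hand side is the characteristic function of the double coset of the \emph{product of the section values}, not of the section value of the product. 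Since $\widetilde{p}$ is not multiplicative, $n_{\tau_1}n_{\tau_2}$ and $n_{\tau_1\tau_2}$ differ by an element of $M_1(F)\subseteq K(F)$ which in general does not lie in $K_m(F)$, so $K_m(F)\,n_{\tau_1}n_{\tau_2}\,K_m(F)$ and $K_m(F)\,n_{\tau_1\tau_2}\,K_m(F)$ are different double cosets and your identity is false in general. Consequently the asserted equality $t_{n_\tau}=t_{n_{\sigma_1}}*\cdots*t_{n_{\sigma_r}}$ does not hold; you flagged exactly the right danger (the section is not a homomorphism) but then resolved it with an identity that itself presupposes a multiplicative section.

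The repair is small and is precisely the device used in the paper: from the factorization $\tau=\sigma_1+\cdots+\sigma_r$ with $\sigma_i\in\Sigma$, the actual Lemma \ref{lemma_conv}(1) gives $t_{n_{\sigma_1}}*\cdots*t_{n_{\sigma_r}}=t_{n_{\sigma_1}\cdots n_{\sigma_r}}$, and one writes $n_\tau=m\,n_{\sigma_1}\cdots n_{\sigma_r}$ for some $m\in M_1(F)\subseteq K(F)$, where $M_1(F)$ is the parahoric subgroup of $M(F)$. Lemma \ref{lemma_conv}(2) then yields $t_{n_\tau}=t_m*t_{n_{\sigma_1}}*\cdots*t_{n_{\sigma_r}}$, and $t_m=t_k$ for the representative $k\in\mathcal{S}_K$ of the class $mK_m(F)$, so $t_{n_\tau}$ lies in the subalgebra generated by $S$ (though it need not be a product of the $t_{n_{\sigma_i}}$ alone). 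With this correction your argument coincides with the paper's proof; the rest of your write-up, including the observation that no hypothesis on $R$ beyond invertibility of the pro-order of $K_m(F)$ is needed, is fine.
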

\begin{proof}
Note that the Hecke algebra $\mathcal{H}_R(G(F),K_m(F))$ is generated an $R$-module by the set
$$ \big\{t_{k_1n_\tau k_2^{-1}}:k_1,k_2\in \mathcal{S}_K, \tau\in \Omega_M^+\big\}. $$
Since $\Sigma$ generates $\Omega_M^+$, there exists $\tau_i\in \Sigma$ so that $\tau = \sum_{i} \tau_i$. Then $n_\tau = m\prod_i n_{\tau_i}$ for some $m\in M_1(F)\subseteq K(F)$, where $M_1(F)$ is the unique parahoric subgroup of $M(F)$ attached to the point in the singleton set $\mathcal{B}(M,F)$. Now it follows from Lemma \ref{lemma_conv} that $\mathcal{H}_R(G(F),K_m(F))$ is generated by the set $S$.
\end{proof}
\subsection{}
Let $\mathscr{M}$ (resp. $\mathscr{M}'$) be the smooth affine $\mathfrak{o}_F$-group scheme underlying $M$ (resp. $M'$). Consider the following subgroups
$$ M_m(F) = {\rm Ker}\big(\mathscr{M}(\mathfrak{o}_F)\rightarrow \mathscr{M}(\mathfrak{o}_F/\mathfrak{p}_F^m)\big) $$
and
$$ M_m(F') = {\rm Ker}\big(\mathscr{M'}(\mathfrak{o}_{F'})\rightarrow \mathscr{M'}(\mathfrak{o}_{F'}/\mathfrak{p}_{F'}^m)\big). $$
If the fields $F$ and $F'$ are $r$-close, where $r\geq m$ is as in Proposition \ref{par_con}, then we have a group isomorphism (see \cite[Proposition 3.4]{Kaz_generalreductive})
\begin{equation}\label{minimal_levi_con}
M(F)/M_m(F)\xrightarrow{\sim} M(F')/M_{m}(F').
\end{equation}
Let $\widetilde{p}':\Omega_{M'}\rightarrow M(F'), \tau'\mapsto n_{\tau'}$ be the section of the Kottwitz homomorphism $\kappa_{M',F'}$. Then, under the isomorphism (\ref{minimal_levi_con}), the class of
$n_\tau$ is mapped to the class of $n_{\tau'}$. 
\subsubsection{}
Let $W(G,A)$ (resp. $W(G',A')$) denote the Weyl group of $G$ (resp. $G'$) with respect to the maximal torus $A$ (resp. $A'$). Then there exists the following isomorphisms (\cite[Proposition 4.3]{Kaz_generalreductive}):
$$ W(G,A)\backslash\Omega_M\simeq W(G',A')\backslash\Omega_{M'}\,\,\,\,\text{and}\,\,\, \Omega_M\simeq\Omega_{M'}, $$ 
and the set of dominant weights $\Omega_M^+$ is mapped onto the set of dominant weights $\Omega_{M'}^+$ under the above isomorphism. Using Proposition \ref{par_con}, we get the group homomorphism
$$ p_m:K(F)/K_m(F)\times K(F)/K_m(F)\longrightarrow 
K(F')/K_m(F')\times K(F')/K_m(F') $$ 
such that $p_m(\Gamma_\tau)=\Gamma_{\tau'}$ for all $\tau\in \Omega_M^+$. Let $T_\tau(F)\subseteq K_m(F)\times K_m(F)$ be the set of representatives of   $$ \Big(K(F)/K_m(F)\times K(F)/K_m(F)\Big)/\Gamma_\tau. $$
Similarly define $T_{\tau'}(F')$. Then we have a bijection $T_\tau(F)\rightarrow T_{\tau'}(F')$,\, $(k_i,k_j)\mapsto (k_i',k_j')$, which enables us to define an $R$-module isomorphism
$$ {\rm Kaz}_{m,R} : \mathcal{H}_R(G(F),K_m(F))\longrightarrow 
\mathcal{H}_R(G'(F'),K_m(F')) $$
by requiring that
$$ t_{k_in_\tau k_j^{-1}}\longmapsto 
t_{k_i'n_{\tau'}k_j'^{-1}} $$ 
for all $\tau\in\Omega_M^+$ and $(k_i,k_j)\in T_\tau(F)$.

\begin{remark}\label{Kaz_product}\normalfont
Let $C$ be a finite subset of $\Omega_M^+$ and let 
$$ G_C = \bigcup_{\tau\in C}G_\tau(F). $$
By \cite[Lemma 4.6]{Kaz_generalreductive}, there exists a positive integer 
$n_C \geq m$ such that for all $g\in G_C$, we have 
$$ gK_{n_C}(F)g^{-1} \subseteq K_m(F). $$ 
Choose an integer $N\geq {\rm max}\,\{n_C,r\}$, where $r$ is the integer as in Proposition \ref{par_con}. If $F$ and $F'$ are $N$-close, Proposition \ref{par_con} yields an isomorphism 
$$ p_{n_C}:K(F)/K_{n_C}(F)\rightarrow K(F)/K_{n_C}(F). $$ 
Then we have
\begin{equation}\label{kaz_product_level}
{\rm Kaz}_{m,R}(f_1*f_2) = {\rm Kaz}_{m,R}(f_1) * 
{\rm Kaz}_{m,R}(f_2),
\end{equation} 
for all $f_1,f_2\in \mathcal{H}_R(G(F),K_m(F))$ supported on $G_C$. 
\end{remark}
With these ingredients, we now prove the following theorem.
\begin{theorem}\label{Kaz_isom}
Let $R$ be a Noetherian $\mathbb{Z}_l$-algebra. Let $m$ be a positive integer, and let $r\geq m$ be as in Proposition \ref{par_con}. There exists $N\geq r$ such that for any two non-Archimedean local fields $F$ and $F'$ that are $N$-close, the map ${\rm Kaz}_{m,R}$ is an isomorphism of $R$-algebras.
\end{theorem}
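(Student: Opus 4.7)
The plan is to use the finite presentation of $\mathcal{H}_R(G(F),K_m(F))$ furnished by Proposition \ref{Hecke_finiteness} to reduce multiplicativity of ${\rm Kaz}_{m,R}$ on the whole Hecke algebra to a bounded-degree computation that Remark \ref{Kaz_product} already handles. By the generating lemma just above, $\mathcal{H}_R(G(F),K_m(F))$ is generated as an $R$-algebra by the finite set $S=\{t_{n_\tau}:\tau\in\Sigma\}\cup\{t_k:k\in\mathcal{S}_K\}$; combining this with Proposition \ref{Hecke_finiteness}, I fix a finite presentation
$$\mathcal{H}_R(G(F),K_m(F))\cong R\langle x_1,\ldots,x_n\rangle/(P_1,\ldots,P_s)$$
and set $d:=\max_j\deg P_j<\infty$. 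Each element of $S$ is supported on a single double coset $G_\tau$ with $\tau\in\Sigma\cup\{0\}$, so every word $s_{i_1}*\cdots*s_{i_l}$ with $l\le d$ is supported on a finite union $G_C$, where $C\subseteq\Omega_M^+$ is finite and depends only on $d$, $\Sigma$, and the semigroup structure of $\Omega_M^+$.

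Using Remark \ref{Kaz_product}, choose $N\ge r$ so that for any $F,F'$ that are $N$-close, the identity ${\rm Kaz}_{m,R}(f_1*f_2)={\rm Kaz}_{m,R}(f_1)*{\rm Kaz}_{m,R}(f_2)$ holds whenever both $f_1,f_2$ are supported on $G_C$. An induction on $l\le d$, splitting off one factor at a time so that both factors remain supported on $G_C$, then yields
$${\rm Kaz}_{m,R}(s_{i_1}*\cdots*s_{i_l})={\rm Kaz}_{m,R}(s_{i_1})*\cdots*{\rm Kaz}_{m,R}(s_{i_l}).$$
Applied to each relation $P_j(s_1,\ldots,s_n)=0$, this gives $P_j({\rm Kaz}_{m,R}(s_1),\ldots,{\rm Kaz}_{m,R}(s_n))=0$ in $\mathcal{H}_R(G'(F'),K_m(F'))$, so the assignment $s_i\mapsto{\rm Kaz}_{m,R}(s_i)$ extends by the universal property of a finite presentation to an $R$-algebra homomorphism
$$\bar\phi:\mathcal{H}_R(G(F),K_m(F))\longrightarrow\mathcal{H}_R(G'(F'),K_m(F')).$$

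It remains to compare $\bar\phi$ with ${\rm Kaz}_{m,R}$ on the $R$-module basis $\{t_{k_in_\tau k_j^{-1}}\}$. By Lemma \ref{lemma_conv} together with the writing $n_\tau=m\prod_a n_{\tau_a}$ for $\tau=\sum_a\tau_a$ with $\tau_a\in\Sigma$ and $m\in M_1(F)\subseteq K(F)$, each basis element is a convolution of elements of $S$. Applying the $R$-algebra homomorphism $\bar\phi$ to that convolution gives the corresponding convolution on the $F'$-side; Lemma \ref{lemma_conv} applied over $F'$ re-collects this into $t_{k_i'n_{\tau'}k_j'^{-1}}$, which agrees with ${\rm Kaz}_{m,R}(t_{k_in_\tau k_j^{-1}})$ thanks to the compatibility of the Kottwitz sections $\widetilde p,\widetilde{p}'$ with the isomorphism $\Omega_M\simeq\Omega_{M'}$. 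Hence $\bar\phi={\rm Kaz}_{m,R}$, and since ${\rm Kaz}_{m,R}$ was already an $R$-module isomorphism, it is an $R$-algebra isomorphism.

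The main obstacle I anticipate is the combinatorial bookkeeping: the finite set $C$ and the integer $N$ must be chosen uniformly so that Remark \ref{Kaz_product} accommodates every intermediate convolution occurring in every relation $P_j$, and so that the identification on the $F'$-side is well-defined (in particular the passage $m\leftrightarrow m'$ for $m\in M_1(F)$). Only finitely many $P_j$ of bounded degree $d$ occur, so this is possible, but it must be arranged once and for all before invoking Remark \ref{Kaz_product}. The key input beyond \cite{Kaz_generalreductive} is the finite presentation from \cite{dat2024finiteness}, which is precisely what lets the reduction go through over an arbitrary Noetherian $\mathbb{Z}_l$-algebra rather than just $\mathbb{C}$.
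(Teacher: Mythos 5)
Your proposal is correct and follows essentially the same route as the paper: both use the finite presentation from Proposition \ref{Hecke_finiteness} together with Remark \ref{Kaz_product} (applied on a finite union $G_C$ large enough to support all bounded-degree products of the generators) to see that the finitely many relations are preserved, then extend to an $R$-algebra homomorphism via the presentation and identify it with ${\rm Kaz}_{m,R}$ on the coset basis using Lemma \ref{lemma_conv}. The only cosmetic difference is your use of the generating set $S$ from the generating lemma rather than the set $X_0=\bigcup_{\tau\in B}X_\tau$ of the paper, which changes nothing of substance.
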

\begin{proof}
The proof is identical to that of \cite[Theorem A]{Kaz_split}. We write it down for completeness. We have to show that ${\rm Kaz}_{m,R}$ preserves the ring structure. Fix a finite subset $B\subseteq \Omega_M^+$, containing $0$ such that $B$ generates $\Omega_M^+$ as a semigroup. Consider the finite set 
$$ X_0 = \bigcup_{\tau \in B} X_\tau, $$
where $X_\tau =\big\{K_m(F)\,g\,K_m(F) \mid g\in G_\tau(F)\big\}$. It follows from Lemma \ref{lemma_conv} that the algebra $\mathcal{H}_R(G(F),K_m(F))$ is generated by $\big\{t_g:g\in X_0\big\}$, where $t_g$ denotes the characteristic function on the double coset space $K_m(F)\,g\,K_m(F)$. Assume that $X_0 = 
\{x_1,\dots, x_s\}$. Let $R\langle X_1,\dots,X_s\rangle$ be 
the non-commutative polynomial algebra with generators $X_1,\cdots,X_s$. Since $\mathcal{H}_R(G(F),K_m(F))$ is finitely presented (Proposition \ref{Hecke_finiteness}), there exists a surjective morphism of $R$-algebras
$$ \varphi : R\langle X_1\dots,X_s\rangle \rightarrow \mathcal{H}_R(G(F),K_m(F)) $$
$$ X_i \longmapsto t_{x_i} $$
such that ${\rm Ker}(\varphi)$ is a two-sided ideal in $R\langle X_1,\dots,X_s\rangle$, generated by non-commutative polynomials, say $f_1, f_2,\dots, f_d$. Let 
$$ M = {\rm max}\big\{a_i : 1\leq i\leq d\big\}, $$ 
where $a_i$ is the degree of the polynomial $f_i$. Then there exists a finite set $C\subseteq \Omega_M^+$ such that 
$$ X_C = \bigcup_{\tau\in C} X_\tau $$ 
contains a compact set of the form $\big\{g_1g_2\dots g_M: g_i\in X_0\big\}$. By Remark \ref{Kaz_product}, there exists an integer $N>m$, large enough, such that if $F$ and $F'$ are $N$-close, we have 
$$ {\rm Kaz}_{m,R}(f_1*f_2) = {\rm Kaz}_{m,R}(f_1)*
{\rm Kaz}_{m,R}(f_2), $$
for all functions $f_1, f_2$, supported on $G_C$. For each $1\leq i\leq d$, the above identity gives 
\begin{equation}\label{kaz_iden_1}
f_i\big({\rm Kaz}_{m,R}(t_{x_1}),\dots, 
{\rm Kaz}_{m,R}(t_{x_s})\big) = 0.
\end{equation}
If $X_m$ and $X_m'$ denote the set of double coset spaces $K_m(F)\setminus G(F)/K_m(F)$ and $K_m(F')\setminus G(F')/K_m(F')$ respectively, and $\phi:X_m\rightarrow X_m'$ is a bijection, then (\ref{kaz_iden_1}) is equivalent to the following identity
$$ f_i\big(t_{\phi(x_1)},\dots,t_{\phi(x_s)}\big) = 0. $$
Consider the homomorphism 
$$ \varphi':R\langle X_1,\dots,X_s\rangle \longrightarrow \mathcal{H}_R(G(F'),K_m(F')) $$
$$ X_j \longmapsto t_{\phi(x_j)}. $$
Note that $\varphi'$ factors through ${\rm Ker}(\varphi)$, and 
hence we get an unique $R$-algebra morphism
$$ \widetilde{\varphi'} : \mathcal{H}_R(G(F),K_m(F))
\longrightarrow \mathcal{H}_R(G(F'),K_m(F')) $$
such that $\widetilde{\varphi'}(t_{x_j}) = t_{\phi(x_j)}$, 
for all $1\leq j\leq s$. Since $C$ generates $\Omega_M^+$ as a semigroup, using the identities in Lemma \ref{lemma_conv}, we get that 
$$ \widetilde{\varphi'}(t_x) = t_{\phi(x)} = {\rm Kaz}_{m,R}^F(t_x), $$
for all $x\in X_m$. This shows that $\widetilde{\varphi'} = {\rm Kaz}_{m,R}$, which is an $R$-algebra morphism. Hence the theorem.
\end{proof}

\section{Integrality under close local fields}
In this section, we prove that integrality is preserved under close local fields. This is a consequence of Theorem \ref{Kaz_isom}. 
\subsection{}
Let $F$ be a non-Archimedean local field with residue characteristic $p$ and $G$ be the $F$-points some connected reductive group defined over $F$. Let $R$ be a commutative ring with unity of characteristic different from $p$. Let $(\pi,V)$ be a smooth $R[G]$ modules, where smooth means if each vector $v\in V$ is stabilised by an open compact subgroup of $G$. Then, for every compact open subgroup $K$ of $G$, the space $V^K$ of $K$-fixed vectors is a $\mathcal{H}_R(G,K)$ module. Let ${\rm Irr}_R(G,K)$ be the set of all isomorphism classes of simple $R[G]$ modules $(\pi,V)$ such that $V^K \ne 0$. We denote by $\mathcal{G}_R(G,K)$ the set of all isomorphism classes of simple $\mathcal{H}_R(G,K)$ modules. By \cite[Chapter I, Section 6.3]{Vigneras_modl_book}, there exists a bijection 
\begin{equation}\label{modules_bij_1}
\mathcal{J} : {\rm Irr}_R(G,K) \longrightarrow \mathcal{G}_R(G,K),\,\,\, V\longmapsto V^K.
\end{equation}
\subsubsection{}
Let $F'$ be a non-Archimedean local field such that $F'$ is $m$-close to $F$. Let $G'$ be the $F'$-points of some connected reductive group defined over $F'$, associated with $G$ (see Subsection \ref{red_close_local} for the definition of $G'$). Let $K_m$ and $K_m'$ be the $m$-th congruence subgroups of $G$ and $G'$ respectively, defined as in Subsection \ref{par_close_local}. When $R$ is a Noetherian $\mathbb{Z}_l$-algebra, where $l$ and $p$ are distinct primes, Theorem \ref{Kaz_isom} gives an $R$-algebra isomorphism 
\begin{equation}\label{Kaz_isom_1}
{\rm Kaz}_{m,R} : \mathcal{H}_R(G,K_m) \longrightarrow \mathcal{H}_R(G',K_m')
\end{equation}
provided the fields $F$ and $F'$ are $r$-close for some integer $r>m$.
\subsubsection{}
Let $R$ be a Noetherian $\mathbb{Z}_l$-algebra, where $l\ne p$. Let $(\pi,V)$ be a simple smooth $R[G]$ module with $V^{K_m}\ne 0$. Then the space of $K_m$-fixed vectors $V^{K_m}$ is a simple $\mathcal{H}_R(G,K_m)$ module by the correspondence (\ref{modules_bij_1}). Using Kazhdan isomorphism (\ref{Kaz_isom_1}), the space $V^K$ is also a simple $\mathcal{H}_R(G',K_m')$ module. Then the bijection (\ref{modules_bij_1}) gives a smooth simple $R[G']$ module $(\pi',V')$ such that $V'^{K_m'}$ is isomorphic to $V^{K_m}$ as an $R$-module. 
\begin{lemma}\label{modules_bij_2}
There exists a bijective correspondence between the sets
${\rm Irr}_R(G,K_m)$ and ${\rm Irr}_R(G',K_m')$.
\end{lemma}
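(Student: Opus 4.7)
The plan is to assemble the desired bijection as a three-step composition, using the two tools already in place: the Vignéras correspondence \eqref{modules_bij_1} and the Kazhdan algebra isomorphism \eqref{Kaz_isom_1}. First I would invoke \eqref{modules_bij_1} for the pair $(G,K_m)$ to get a bijection $\mathcal{J}:{\rm Irr}_R(G,K_m)\xrightarrow{\sim}\mathcal{G}_R(G,K_m)$, $V\mapsto V^{K_m}$. (One needs to check that $K_m$ has pro-order invertible in $R$ so that Vign\'eras applies; since $R$ is a Noetherian $\mathbb{Z}_l$-algebra with $l\neq p$ and $K_m$ is pro-$p$ up to a finite-index subgroup whose order is a $p$-power times units, this is automatic.)

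Next, any $R$-algebra isomorphism $\Phi:A\xrightarrow{\sim} B$ induces an equivalence between the categories of (left) $A$- and $B$-modules by restriction of scalars along $\Phi^{-1}$, and this equivalence preserves simplicity. Applied to $\Phi={\rm Kaz}_{m,R}$ from \eqref{Kaz_isom_1}, this produces a bijection
$$ \Psi:\mathcal{G}_R(G,K_m)\xrightarrow{\sim}\mathcal{G}_R(G',K_m'), $$
sending the isomorphism class of a simple $\mathcal{H}_R(G,K_m)$-module $W$ to the class of the same underlying $R$-module made into a simple $\mathcal{H}_R(G',K_m')$-module via ${\rm Kaz}_{m,R}^{-1}$. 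Finally I apply the inverse of \eqref{modules_bij_1} for the pair $(G',K_m')$, namely $\mathcal{J}'^{-1}:\mathcal{G}_R(G',K_m')\xrightarrow{\sim}{\rm Irr}_R(G',K_m')$.

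Composing these three bijections gives
$$ {\rm Irr}_R(G,K_m)\xrightarrow{\mathcal{J}}\mathcal{G}_R(G,K_m)\xrightarrow{\Psi}\mathcal{G}_R(G',K_m')\xrightarrow{\mathcal{J}'^{-1}}{\rm Irr}_R(G',K_m'), $$
which is the required bijection. The only conditions to keep track of are those under which each step is available: $\mathcal{J}$ and $\mathcal{J}'$ exist because $l\neq p$, and $\Psi$ exists provided $F$ and $F'$ are $N$-close for the integer $N$ produced by Theorem \ref{Kaz_isom}, so the statement is proved under exactly the hypothesis of that theorem.

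The main point to be careful about is actually almost trivial once set up correctly: the step $\Psi$ is a purely formal consequence of having an algebra isomorphism and the fact that simple modules are preserved by such equivalences, so there is no genuine obstacle beyond checking that the three bijections compose with matching domains and codomains. In particular, no new input from Bruhat--Tits theory or from the construction of ${\rm Kaz}_{m,R}$ is needed at this stage; the hard work has already been done in Theorem \ref{Kaz_isom}.
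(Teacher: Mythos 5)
Your proposal is correct and follows essentially the same route as the paper: compose the Vign\'eras bijection $\mathcal{J}$ for $(G,K_m)$, the bijection on simple modules induced by the algebra isomorphism ${\rm Kaz}_{m,R}$ (the paper's map $\Phi$, your $\Psi$), and $\mathcal{J}'^{-1}$ for $(G',K_m')$. Your remarks on pro-order invertibility and on the $N$-closeness hypothesis are sensible additions but do not change the argument.
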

\begin{proof}
Consider the diagram 
$$
\xymatrix{
	{\rm Irr}_R(G,K_m) \ar[dd]_{\mathcal{J}}  \ar[rr]^{}  &&  {\rm Irr}_R(G',K_m') \ar[dd]^{\mathcal{J}'} \\\\
	\mathcal{G}_R(G,K_m) \ar[rr]_{\Phi} &&  \mathcal{G}_R(G',K_m')
}
$$
Note that the horizontal map $\Phi$ is a bijection, induced by the Kazhdan isomorphism ${\rm Kaz}_{m,R}$. Then the composition $\mathcal{J}'^{-1}\circ \Phi\circ \mathcal{J}$ is the bijection from ${\rm Irr}_R(G,K_m)$ onto ${\rm Irr}_R(G',K_m')$, sending $(\pi,V)$ to $(\pi',V')$, which makes commutative the above diagram.
\end{proof}

\subsection{Integral representation}
Here, we assume $R = \overline{\mathbb{Q}}_l$, an algebraic closure of the field of $l$-adic numbers $\mathbb{Q}_l$. Let $\overline{\mathbb{Z}}_l$ be the ring of integers of $\overline{\mathbb{Q}}_l$. Then, for any field extension $L$ of $\mathbb{Q}_l$ contained in $\overline{\mathbb{Q}}_l$, its ring of integers $\mathfrak{o}_L$ is precisely $L\cap \overline{\mathbb{Z}}_l$. A smooth representation $(\pi,V)$ of $G$, where $V$ is a vector space over $\overline{\mathbb{Q}}_l$, is called {\it integral} if $\pi$ is of finite length and there exists a $G$ invariant $\overline{\mathbb{Z}}_l$-lattice $\mathcal{L}$ in $V$.
\subsubsection{}
Let $L$ be an algebraic extension of $\mathbb{Q}_l$ contained in $\overline{\mathbb{Q}}_l$. A smooth $\overline{\mathbb{Q}}_l$-representation $(\rho,V)$ of $G$ is realizable over $L$ if there exists a $G$ stable $L$-vector subspace $V_L\subseteq V$ such that $V_L\otimes_{L}\overline{\mathbb{Q}}_l = V$. In \cite[II.4.7]{Vigneras_modl_book}, Vign\'eras proved that a smooth irreducible $\overline{\mathbb{Q}}_l$-representation of $G$ is always realizable over a finite extension of $\mathbb{Q}_l$.
\subsubsection{}
Let $m$ be a positive integer. Let $F$ and $F'$ be two non-Archimedean local fields of same residual characteristic $p$ such that $F$ and $F'$ are $m$-close, i.e., there exists a ring isomorphism
$$ \mathfrak{o}_F/\mathfrak{p}_F^m \xrightarrow{\sim} \mathfrak{o}_{F'}/\mathfrak{p}_{F'}^m. $$ 
Let $\textbf{G}$ be a connected split reductive algebraic group defined over $\mathbb{Z}$. Let $G=\textbf{G}(F)$ and $G'=\textbf{G}(F')$. Let $K_m$ and $K_m'$ be the $m$-th congruence subgroups 
$$ K_m = {\rm Ker}\big(\textbf{G}(\mathfrak{o}_F)\xrightarrow{{\rm mod}-\mathfrak{p}_F^m} \textbf{G}(\mathfrak{o}_F/\mathfrak{p}_F^m)\big) $$
and 
$$ K_m' = {\rm Ker}\big(\textbf{G}(\mathfrak{o}_{F'})\xrightarrow{{\rm mod}-\mathfrak{p}_{F'}^m} \textbf{G}(\mathfrak{o}_{F'}/\mathfrak{p}_{F'}^m)\big). $$
Recall that Kazhdan isomorphism induces a bijection (Lemma \ref{modules_bij_2} for $R=\overline{\mathbb{Q}}_l$) 
$$ \mathfrak{L} : {\rm Irr}_{\overline{\mathbb{Q}}_l}(G,K_m) \longrightarrow {\rm Irr}_{\overline{\mathbb{Q}}_l}(G',K_m'). $$
\subsubsection{}
We now recall the construction of $(\rho',V')$, which is the image of $(\rho,V)$ under $\mathfrak{L}$. Consider the simple $\mathcal{H}(G,K_m )$ module $V^{K_m}$. Then $V^{K_m}$ is also a simple $\mathcal{H}(G',K_m')$ module via Kazhdan isomorphism $\mathcal{H}(G,K_m)\xrightarrow{\sim}\mathcal{H}(G',K_m')$. Let us consider the $\mathcal{H}(G')$ module
$$ \mathcal{H}(G')\otimes_{\mathcal{H}(G',K_m')} V^{K_m}. $$
This $\mathcal{H}(G')$ module yields a smooth irreducible representation $(\rho',V')$ of $G'$, where the $G'$-action on $V'$ is given by
\begin{equation}\label{action}
\rho(g')v' = 1_{g'K_m'} *' v'
\end{equation}
for $g'\in G'$ and $v'\in V'$. Here $*'$ denotes the action of $\mathcal{H}(G')$ on the space $V'$. Then we have the following result.
\begin{theorem}
Let $F$ and $F'$ be two $m$-close non-Archimedean local fields with same residue characteristic. Let $\textbf{G}$ be a connected split reductive group defined over $\mathbb{Z}$. Let $G$ (resp. $G'$) be the $F$ (resp. $F'$)-rational points of $\textbf{G}$. Let $(\rho,V)$ be an irreducible smooth $\overline{\mathbb{Q}}_l$-representation of $G$ such that $V^{K_m}\ne 0$. Let $(\rho',V')$ be the smooth irreducible $\overline{\mathbb{Q}}_l$-representation of $G'$, obtained via Kazhdan isomorphism. Then $\rho$ is integral if and only if $\rho'$ is integral.
\end{theorem}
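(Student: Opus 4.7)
The plan is to transport integrality across the Kazhdan isomorphism at the level of $K_m$-fixed vectors, but working with coefficients in the ring of integers $\mathfrak{o}_L$ of a suitable finite extension $L/\mathbb{Q}_l$. First, by Vign\'eras (\cite[II.4.7]{Vigneras_modl_book}), both $\rho$ and $\rho'$ are jointly realizable over some finite extension $L/\mathbb{Q}_l$, giving models $(\rho_L, V_L)$ and $(\rho'_L, V'_L)$. Since $\overline{\mathbb{Z}}_l$ is the directed union of the $\mathfrak{o}_L$ and $V^{K_m}$ is finite-dimensional by admissibility, $\rho$ is integral over $\overline{\mathbb{Z}}_l$ if and only if $V_L$ admits a $G$-stable $\mathfrak{o}_L$-lattice (after possibly enlarging $L$). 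Since $\mathfrak{o}_L$ is a Noetherian $\mathbb{Z}_l$-algebra, Theorem \ref{Kaz_isom} supplies, for $F, F'$ sufficiently close, an $\mathfrak{o}_L$-algebra isomorphism $\mathcal{H}_{\mathfrak{o}_L}(G, K_m) \xrightarrow{\sim} \mathcal{H}_{\mathfrak{o}_L}(G', K_m')$ whose base change to $\overline{\mathbb{Q}}_l$ recovers the isomorphism defining $\rho \mapsto \rho'$.

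The central step is the following equivalence for any smooth irreducible $L[G]$-module $V_L$ with $V_L^{K_m}\neq 0$: $\rho_L$ is integral (i.e., admits a $G$-stable $\mathfrak{o}_L$-lattice) if and only if the simple $\mathcal{H}_L(G,K_m)$-module $V_L^{K_m}$ admits an $\mathcal{H}_{\mathfrak{o}_L}(G,K_m)$-stable $\mathfrak{o}_L$-lattice. The forward direction is essentially immediate: if $\mathcal{L}\subseteq V_L$ is a $G$-stable lattice, then $\mathcal{L}^{K_m}$ is Hecke-stable, and admissibility ensures $V_L^{K_m}$ is finite-dimensional so that $\mathcal{L}^{K_m}$ is an honest lattice. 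For the converse, given a Hecke-stable lattice $M\subseteq V_L^{K_m}$, I would set
$$ \mathcal{L} := \mathcal{H}_{\mathfrak{o}_L}(G) * M \subseteq V_L, $$
which is $G$-stable (since $\rho_L(g)(f * v) = (L_g f) * v$ and left translation preserves $\mathcal{H}_{\mathfrak{o}_L}(G)$) and spans $V_L$ over $L$: irreducibility forces $V_L = \mathcal{H}_L(G)\cdot V_L^{K_m}$ while $L\cdot M = V_L^{K_m}$. Granted this equivalence, the theorem follows at once: under the Kazhdan iso the spaces $V_L^{K_m}$ and $V'^{K_m'}_L$ are identified as $\mathfrak{o}_L$-modules and the Hecke-stability conditions match, so a stable lattice on one side yields one on the other.

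The main obstacle is verifying that the $G$-stable submodule $\mathcal{L}$ produced in the reverse direction is actually an $\mathfrak{o}_L$-lattice, i.e.\ that $\mathcal{L}^{K'}$ is finitely generated over $\mathfrak{o}_L$ for every compact open subgroup $K'\subseteq G$ of pro-order invertible in $\mathfrak{o}_L$. Using the identity $\mathcal{L}^{K'} = (e_{K'} * \mathcal{H}_{\mathfrak{o}_L}(G) * e_{K_m})\cdot M$, this reduces to showing that the bimodule $e_{K'} * \mathcal{H}_{\mathfrak{o}_L}(G) * e_{K_m}$ is finitely generated as a right $\mathcal{H}_{\mathfrak{o}_L}(G,K_m)$-module. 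I expect this to follow from the Noetherianity and finite presentation of Hecke algebras (Proposition \ref{Hecke_finiteness}) combined with admissibility of $V_L$, which caps the rank of $\mathcal{L}^{K'}$ by $\dim_L V_L^{K'} < \infty$; alternatively, one can invoke the analogous statement from \cite[II.4]{Vigneras_modl_book} directly and obtain the equivalence as a black box.
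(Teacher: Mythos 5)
Your proposal follows essentially the same route as the paper's proof: realize the representation over the ring of integers $\mathfrak{o}_L$ of a finite extension $L/\mathbb{Q}_l$, transport the Hecke-stable lattice $\mathcal{L}^{K_m}$ through the $\mathfrak{o}_L$-algebra isomorphism of Theorem~\ref{Kaz_isom}, and generate a $G'$-stable lattice in $V'$ (the paper's $\mathcal{H}_{\mathfrak{o}_L}(G')\otimes_{\mathcal{H}_{\mathfrak{o}_L}(G',K_m')}\mathcal{L}^{K_m}$ is exactly your $\mathcal{H}_{\mathfrak{o}_L}(G')*M$), so the two arguments coincide in substance. One caution on the step you flag: admissibility ``capping the rank'' is not by itself a proof of finite generation of $\mathcal{L}^{K'}$ (an $\mathfrak{o}_L$-submodule of a finite-dimensional $L$-space need not be finitely generated, e.g.\ $L$ itself), so the boundedness really does require the bimodule finiteness you identify or Vign\'eras' integrality criterion invoked as a black box --- this is the same point the paper itself passes over rather quickly when asserting that $\mathcal{L}'^{K_N'}$ is free of finite rank for $N\geq m$.
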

\begin{proof}
We prove one direction of the theorem. The other one follows by a similar argument. Since any irreducible smooth $\overline{\mathbb{Q}}_l$-representation of $G$ is realizable over a finite extension of $\mathbb{Q}_l$, we can (and do) assume that $V$ is a vector space over $L$, for some finite extension $L$ of $\mathbb{Q}_l$ contained in $\overline{\mathbb{Q}}_l$. Suppose $\rho$ is integral, i.e., there exists a $G$ stable $\mathfrak{o}_L$-lattice $\mathcal{L}\subseteq V$. Then $\mathcal{L}^{K_m}:=\mathcal{L}\cap V^{K_m}$ is a module over $\mathcal{H}_{\mathfrak{o}_L}(G,K_m)$. Using Theorem \ref{Kaz_isom} for $R=\mathfrak{o}_L$, we get that $\mathcal{L}^{K_m(F)}$ is also a module over $\mathcal{H}_{\mathfrak{o}_L}(G',K_m')$. We now consider the $\mathcal{H}_{\mathfrak{o}_L}(G')$ module
$$ \mathcal{H}_{\mathfrak{o}_L}(G')
\otimes_{\mathcal{H}_{\mathfrak{o}_L}(G',K_m')}
\mathcal{L}^{K_m}. $$
It yields a smooth $G'$ stable $\mathfrak{o}_L$-submodule $\mathcal{L}'$ in $V'$, where the $G'$-action on $\mathcal{L}'$  is induced from that on $V'$, defined by (\ref{action}). Note that $\mathcal{L}'^{K_m'} = \mathcal{L}^{K_m}$. Since $\mathcal{L}^{K_m}$ is a free $\mathfrak{o}_L$ module of rank ${\rm dim}_{L}(V^{K_m})$ and ${\rm dim}_{L}(V^{K_m}) = {\rm dim}_{L}(V'^{K_m'})$, the module $\mathcal{L}'^{K_m'}$ is free of rank ${\rm dim}_{L}(V'^{K_m'})$. Fix an $\mathfrak{o}_L$-basis of $\mathcal{L}'^{K_m'}$.

Let $N\geq m$ be an integer. Then $\mathcal{L}'^{K_m'}$ is a submodule of $\mathcal{L}'^{K_N'}$. Now, extending the $\mathfrak{o}_L$-basis of $\mathcal{L}^{K_m'}$ to an $\mathfrak{o}_L$-basis of $\mathcal{L'}^{K_N'}$, we get that $\mathcal{L}'^{K_N'}$ is free of rank ${\rm dim}_L(V'^{K_N'})$. Also $\mathcal{L}'$ being torsion free and $\mathfrak{o}_L$ being principal, we deduce that $\mathcal{L}'^{K_N'}$ is a free $\mathfrak{o}_L$-module of finite rank for any integer $N<m$.
Therefore, $\mathcal{L'}$ is a $G'$ stable $\mathfrak{o}_L$-lattice in $V'$. Hence $\rho'$ is integral.

\end{proof}

\bibliographystyle{amsalpha}
\bibliography{B-K-G}	

\noindent
\texttt{Email: mathsabya93@gmail.com},\\
Department of Mathematics and Statistics,\\
Indian Institute of Technology Kanpur,
U.P. 208016, India.
	
\end{document}